\theoremstyle{plain}
\newtheorem{theorem}{Theorem}[section]
\newtheorem{lemma}[theorem]{Lemma}
\newtheorem{prop}[theorem]{Proposition}
\newtheorem{conjecture}[theorem]{Conjecture}
\newtheorem{remark}[theorem]{Remark}
\theoremstyle{remark}
\newcommand{\pr}{\mathbb{P}}
\newcommand{\sub}[0]{\subseteq}
\newcommand{\beq}[1]{\begin{equation}\label{#1}}
\newcommand{\enq}[0]{\end{equation}}
\newcommand{\ra}[0]{\rightarrow}
\newcommand{\gl}[0]{\lambda }
\newcommand{\mn}[0]{\medskip\noindent}
\newcommand{\nin}[0]{\noindent}
\newcommand{\cE}{\mathcal{E} }
\newcommand{\cF}{\mathcal{F} }
\newcommand{\cG}{\mathcal{G} }
\newcommand{\cH}{\mathcal{H} }
\newcommand{\cU}{\mathcal{U} }
\title{A Proof of the Kahn--Kalai Conjecture}
\author{Jinyoung Park \and Huy Tuan Pham}
\email{jinypark@stanford.edu, huypham@stanford.edu}
\address{Department of Mathematics, Stanford University\\
450 Jane Stanford Way, Building 380, Stanford, CA 94305}
\begin{document}

\maketitle
\begin{abstract}

Proving the ``expectation-threshold'' conjecture of Kahn and Kalai, we show that for any increasing property $\cF$ on a finite set $X$, 
\[p_c(\cF)=O(q(\cF)\log \ell(\cF)),\]
where $p_c(\cF)$ and $q(\cF)$ are the threshold and ``expectation threshold'' of $\cF$, and $\ell(\cF)$ is the maximum of $2$ and the maximum size of a minimal member of $\cF$. 

\end{abstract}
\maketitle

\section{Introduction}\label{sec.intro}

Given a finite set $X$, write $2^X$ for the power set of $X$. For $p \in [0,1]$, let $\mu_p$ be the product measure on $2^X$ given by $\mu_p(A)=p^{|A|}(1-p)^{|X \setminus A|}$. In this paper $\cF \sub 2^X$ always denotes an \textit{increasing property}, meaning that if $B \supseteq A \in \cF$, then $ B \in \cF$. We say that an increasing property $\cF$ is nontrivial if $\cF \ne \emptyset, 2^X$. It is a well-known fact that $\mu_p(\cF) (:=\sum_{A \in \cF} \mu_p(A))$ is continuous and strictly increasing in $p$ for any nontrivial increasing property $\cF$. The \textit{threshold}, $p_c(\cF)$, is then the unique $p$ for which $\mu_p(\cF)=1/2$.  In this paper, we resolve a conjecture of Kahn and Kalai \cite{KK}, reiterated by Talagrand \cite{Talagrand}, relating the threshold and the "expectation-threshold" (definition is below).  

Following \cite{Talagrand}, we say $\cF$ is \textit{$p$-small} if there is $\cG \sub 2^X$ such that 
\beq{cover1} \cF \sub \langle \cG \rangle :=\bigcup_{S \in \cG} \{T: T \supseteq S\}\enq
and
\beq{p-small} \sum_{S \in \cG} p^{|S|} \le 1/2.\enq
We say that $\cG$ is a \textit{cover} of $\cF$ if \eqref{cover1} holds.
The \textit{expectation-threshold} of $\cF$, $q(\cF)$, is defined to be the maximum $p$ such that $\cF$ is $p$-small. Observe that $q(\cF)$ is a trivial lower bound on $p_c(\cF)$, since
\beq{thr.lb} \mu_p(\cF) \le \mu_p(\langle \cG \rangle) \le \sum_{S \in \cG} p^{|S|}.\enq
Note that, with $X_p$ the random variable whose distribution is $\mu_p$, the right-hand side of  \eqref{thr.lb} is $\mathbb E[|\{S \in \cG: S \sub X_p\}|]$.

Given an increasing property $\cF$, let $\ell_0(\cF)$ be the size of a largest minimal element of $\cF$, and let $\ell(\cF)=\max(\ell_0(\cF),2)$. Our main theorem resolves the following conjecture of Kahn and Kalai~\cite{KK}.

\begin{theorem}[The Kahn-Kalai Conjecture]\label{thm:KKC}
There is a universal constant $K$ such that for every finite set $X$ and nontrivial increasing property $\cF \sub 2^X$,
\[p_c(\cF) \le K q(\cF) \log \ell(\cF).\]
\end{theorem}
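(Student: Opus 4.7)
The plan is to prove the equivalent form of the theorem: for a universal $K$, $\mu_p(\cF) \ge 1/2$ whenever $p \ge K q(\cF) \log \ell(\cF)$. Abbreviate $q := q(\cF)$ and $\ell := \ell(\cF)$. By definition of $q$, fix a cover $\cG$ of $\cF$ with $\sum_{S \in \cG} q^{|S|} \le 1/2$ and $|S| \le \ell$ for every $S \in \cG$. Since $\cF \sub \langle \cG \rangle$ yields only the \emph{upper} bound $\mu_p(\cF) \le \mu_p(\langle \cG \rangle)$, the cover cannot be used naively; it will instead serve as a \emph{structural guide} for an iterative random procedure that produces, with probability at least $1/2$, a minimal $F \in \cF$ as a subset of $X_p$.

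The main engine I would aim to establish is a one-step boosting lemma of the following flavor: if $\cG$ is a cover of $\cF$ with $\sum_{S \in \cG} q^{|S|} \le 1/2$ and maximum set size $\ell$, and $W \sim \mu_{p_0}$ for some $p_0 = \Theta(q)$, then with probability at least $1 - O(1/\log \ell)$ over $W$, either (a) some minimal $F \in \cF$ already satisfies $F \sub W$, or (b) there is a constructible new cover $\cG'$ on ground set $X \sm W$ of the residual family $\{F \sm W : F \in \cF,\ F \not\sub W\}$, satisfying $\sum_{S' \in \cG'} q^{|S'|} \le 1/2$ and maximum set size at most $\ell/2$. Iterating the lemma with independent samples $W_1, \ldots, W_T$, coupled so that their union is distributed as $X_p$ at cumulative rate $p = K q \log \ell$, halves the maximum cover-set size each round; after $T = \Theta(\log \ell)$ rounds the size collapses to $O(1)$, at which point the weight bound $\sum q^{|S|} \le 1/2$ together with $|S| = O(1)$ and $p \gtrsim q$ closes the argument by a final direct first-moment estimate. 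A union bound over the $T$ rounds keeps the total failure probability bounded below $1/2$.

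Within the one-step lemma, I would construct $\cG'$ by sorting each $S \in \cG$ according to $|S \cap W|$: sets with $S \sub W$ witness case (a) through an ancestor $F \in \cF$ with $F \supseteq S$; sets with $|S \cap W| \ge |S|/2$ go into $\cG'$ as $S \sm W$ (halved in size); sets with smaller intersection are discarded, or otherwise handled via a backup mechanism that inflates the cover with explicit samples. The principal obstacle I anticipate is the \emph{weight accounting}: the naive expectation $\E[q^{|S \sm W|}] = ((1-p_0) q + p_0)^{|S|}$ can exceed $q^{|S|}$ by a factor as large as $(p_0/q)^{|S|}$, which is prohibitive when $|S| \sim \ell$. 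Overcoming this likely requires not a deterministic $\cG'$ but rather a random refinement whose expected weight is comparable to the original, exploiting cancellations across the family so that the invariant $\sum q^{|S|} \le 1/2$ genuinely survives each round. Tuning this trade-off so that only a constant factor is lost per round --- so $T = \Theta(\log \ell)$ rounds accumulate a total loss of $\Theta(\log \ell)$ in the sampling rate --- is the technical heart of the proof, and precisely what produces the $\log \ell$ factor in the final bound.
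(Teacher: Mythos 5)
There is a fundamental directional error at the very start of your plan, and it is not a fixable detail in the ``weight accounting'' you flag. You attempt to prove $\mu_p(\cF)\ge 1/2$ using the \emph{existence} of a cheap cover $\cG$ at scale $q$. But a cover only encodes $\cF\sub\langle\cG\rangle$, which bounds $\cF$ from above, not below: an arbitrarily sparse $\cF$ can admit the same cover. Concretely, your one-step lemma asserts that ``sets with $S\sub W$ witness case (a) through an ancestor $F\in\cF$ with $F\supseteq S$.'' From $F\supseteq S$ and $S\sub W$ you learn nothing about whether $F\sub W$; the implication goes the wrong way. Minimal elements of $\cF$ are supersets of cover sets, so hitting a cover set in $\cG$ (or in any refined $\cG'$) does not mean hitting $\cF$. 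The same problem kills your endgame: after the size of cover sets drops to $O(1)$, a first-moment argument at rate $p_0\gtrsim q$ can ensure $W$ contains some $S\in\cG'$, but this still does not put any member of $\cF$ inside the accumulated random set. No amount of clever random refinement of $\cG$ can overcome this, because the hypothesis you are working from is simply too weak.

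The paper's proof uses the opposite and stronger half of the definition of $q(\cF)$: the maximality of $q$ means $\cF$ is \textbf{not} $q'$-small for $q'>q$, and it is this \emph{non-existence} of a cheap cover that drives the proof. The argument assumes, towards a contradiction, that the random $m$-set fails to contain a member of $\cF$ with non-negligible probability, and from this assumption iteratively constructs a cheap cover --- built not from an external cover family but from the hypergraph $\cH$ of minimal elements of $\cF$ itself, via the ``minimum fragment'' operation $T(S,W)=S'\setminus W$. The cost of that cover is controlled by an encoding/container lemma (Lemma \ref{lem3.1}: encode $Z=W\cup T$, then note $T\sub\hat S$ for any edge $\hat S\sub Z$ by minimality), and the iteration contracts the maximum edge size by a constant factor per round so that $\Theta(\log\ell)$ rounds suffice. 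Your intuition that a size-halving iteration over $\Theta(\log\ell)$ rounds is the source of the $\log\ell$ factor is in the right spirit, but the iteration has to act on the minimal elements of $\cF$ and must be read as a contradiction against non-$p$-smallness, not as a constructive use of $p$-smallness.
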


\nin Roughly speaking, Theorem \ref{thm:KKC} says that for any increasing property, the threshold is never far from its trivial lower bound given by the expectation threshold.

Thresholds have been a central subject in the study of random discrete structures since its initiation by Erd\H{o}s and R\'enyi \cite{ER1, ER2}, the study of which has flourished in random graph theory, computer science \cite{K, Talagrand}, and statistical physics \cite{Grimmett}. The definition of the threshold above is finer than the original Erd\H{o}s-R\'enyi notion, according to which $p^*=p^*(n)$ is \textbf{a} threshold for $\cF=\cF_n$ if $\mu_p(\cF) \rightarrow 0$ when $p \ll p^*$ and $\mu_p(\cF) \rightarrow 1$ when $p \gg p^*$. That $p_c(\cF)$ is always an Erd\H{o}s-R\'enyi threshold follows from \cite{BT}, in which it was observed that \textit{every} increasing $\cF$ admits a threshold in the Erd\H{o}s-R\'enyi sense.  While much work has been done identifying thresholds for specific properties (see \cite{Bollobas, JLR}), another intensively studied direction in the study of thresholds is ``sharpness'' of thresholds: we refer interested readers to \cite{Friedgut1, Friedgut2}.

To emphasize the strength of Theorem \ref{thm:KKC}, we point out that, in \cite{KK}, Kahn and Kalai wrote that ``It would probably be more sensible to conjecture
that it is not true.'' The expectation threshold is the most naive (and often the easiest) approach to estimating the threshold, and Theorem \ref{thm:KKC} says that for \textit{every} nontrivial increasing property, its threshold is only within a logarithmic factor of this naive estimate. In particular, many important works in this area have been on thresholds for \textit{specific properties,} and Theorem \ref{thm:KKC} easily implies some of those very hard results on the location of thresholds, for example, the appearance of perfect matchings in random $r$-uniform hypergraphs in seminal work of Johansson, Kahn and Vu \cite{JKV}, and the appearance of a given bounded degree
spanning tree in a random graph in recent work of Montgomery \cite{Montgomery} (we note that Montgomery shows a stronger ``universality'' result that above the correct threshold we actually have containment of all bounded degree spanning trees, which is not recoverable from Theorem \ref{thm:KKC}). For more discussion on the significance and applications of this theorem, we refer the readers to \cite{FKNP}, in which a weaker fractional relaxation of Theorem \ref{thm:KKC} was proved.

Two recent breakthroughs related to Theorem \ref{thm:KKC} are the significant progress on the Erd\H{o}s-Rado sunflower conjecture by Alweiss, Lovett, Wu and Zhang \cite{ALWZ} and the resolution of the fractional Kahn-Kalai conjecture by Frankston, Kahn, Narayanan and the first author \cite{FKNP}. The main lemma of \cite{ALWZ} gives a sufficient condition to guarantee that $X_p$ likely satisfies an increasing property, and is shown via a beautiful argument inspired from ideas in Razborov's proof \cite{R} of H\r{a}stad's switching lemma \cite{H}. In \cite{FKNP}, the authors observed the relevance of the developments in \cite{ALWZ} to thresholds and used an improved version of the argument in \cite{ALWZ}, achieved via separating typical ``non-pathological'' and atypical ``pathological'' cases, to establish Theorem \ref{CT'} below. Theorem \ref{CT'} is a sharper version of the main lemma of \cite{ALWZ} and a weaker version of Theorem \ref{thm:KKC}. 
%Recent progress on thresholds started with the beautiful breakthrough on the sunflower conjecture by Alweiss, Lovett, Wu and Zhang \cite{ALWZ}, whose proof takes inspiration from ideas in Razborov's proof \cite{R} of H\r{a}stad's switching lemma \cite{H}. The main lemma of \cite{ALWZ} is closely related to the following weakening of Theorem \ref{thm:KKC} obtained in nice work of Frankston, Kahn, Narayanan and Park \cite{FKNP}. 
%The proof of Theorem \ref{CT'} in \cite{FKNP} is closely related to the main lemma in the breakthrough on the sunflower conjecture by Alweiss, Lovett, Wu and Zhang \cite{ALWZ}, whose proof takes inspiration from ideas in Razborov's proof \cite{R} of H\r{a}stad's switching lemma \cite{H}. The proof of Theorem \ref{CT'} by Frankston, Kahn, Narayanan and Park in \cite{FKNP} follows the encoding argument of \cite{ALWZ} but obtains sharper dependence on $p$ by separating typical ``non-pathological'' and atypical ``pathological'' cases.  
%The statement of the weaker version of Theorem \ref{thm:KKC} by Frankston, Kahn, Narayanan and Park in \cite{FKNP} is as follows: 

We say $\cF$ is \textit{weakly $p$-small} if there is a $\gl:2^V \ra \mathbb R^+$ ($:=[0,\infty)$)
such that
\beq{wpsmall1}
\sum_{S\sub F}\gl_S\geq 1 ~~\textrm{for all } F\in \cF
\enq
and
\beq{wpsmall2} 
\sum_S\gl_Sp^{|S|} \leq 1/2.
\enq 
Note that if we restrict the range of $\gl$ to $\{0,1\}$, then we would recover the definition of the $p$-small property.
Define 
$
q_f(\cF)= \max\{\mbox{$p$ : $\cF$ is weakly $p$-small}\}
$ to be the \emph{fractional expectation-threshold} of $\cF$.
It follows from the definitions that
\beq{q's}
q(\cF)\leq q_f(\cF)\leq p_c(\cF),
\enq
and the main theorem of \cite{FKNP} resolves the following conjecture of Talagrand \cite[Conjecture 8.5]{Talagrand}, which is a weakening of Theorem \ref{thm:KKC}. 

\begin{theorem}\label{CT'}
There is a universal ${K}$ such that for every finite $X$ and nontrivial increasing
$\cF\sub 2^X$,
\[   
p_c(\cF)< Kq_f(\cF)\log \ell(\cF).
\]   
\end{theorem}
%The proof of Theorem \ref{CT'} by Frankston, Kahn, Narayanan and Park in \cite{FKNP} follows the same encoding argument of \cite{ALWZ} but obtains sharper dependence on $q_f(\cF)$ and $\ell(\cF)$ by separating typical ``non-pathological'' and atypical ``pathological'' cases.  

Talagrand \cite[Conjecture 6.3]{Talagrand} also conjectured that the parameters $q$ and $q_f$ are in fact always of the same order:

\begin{conjecture}\label{LT}
There is an absolute constant $L>0$ such that, for any nontrivial increasing $\cF$, $q(\cF) \ge q_f(\cF)/L$.
\end{conjecture}

\nin This of course implies equivalence of Theorems \ref{thm:KKC} and \ref{CT'}, and up until now, proving Conjecture \ref{LT} has been regarded as the most likely direction to prove Theorem \ref{thm:KKC}. However, as Talagrand observes, even simple instances of Conjecture \ref{LT} are not easy to establish. The two ``test cases'' suggested by Talagrand, which are now proved respectively in \cite{DK} and \cite{FKP}, already necessitate nontrivial arguments, and proving the full version of the conjecture is considered as a much harder task.

Our proof of Theorem \ref{thm:KKC} takes a surprisingly simple and direct approach rather than the indirect approach suggested by Conjecture \ref{LT}. Part of our proof is inspired by the argument in \cite{ALWZ, FKNP}, though our implementation is significantly different from the ideas in previous works \cite{ALWZ, FKNP, KNP}. In all of those papers, the notion of ``spread'' plays a key role in the proofs. In particular, it provides the starting point of the proof of Theorem~\ref{CT'} using linear programming duality, while in the setting of Theorem \ref{thm:KKC}, where one cannot exploit linear programming duality, this starting point immediately disappears. In the proof of Theorem~\ref{thm:KKC}, we completely avoid the use of spread, which is the essential workhorse of the proofs in \cite{ALWZ, FKNP, KNP}. This is one of the most surprising points of our proof. A key technical insight in our proof is the notion of \emph{minimum fragments}. With the minimum fragments, our proof allows to utilize a direct argument in the spirit of \cite{ALWZ}, thus giving a simplified proof of the main lemma of \cite{FKNP} as well as the main result of \cite{KNP} without separating pathological cases. It also suggests a clear intuition behind the linear relation between $p_c(\cF)$ and $q(\cF)$; see Remark \ref{rmk 2.2} and a recent paper by Bell and Frieze \cite{BF}.

The use of minimum fragments in the present paper is in fact inspired by our resolution \cite{PPT} of a conjecture of Talagrand (\cite[Problem 4.1]{Talagrand06}, \cite[Conjecture 5.7]{Talagrand} and \cite[Research Problem 13.2.3]{Talagrand2}) and a question of Talagrand (\cite{Talagrand-per} and a problem posed in \cite{Talagrand06}). The argument in \cite{PPT} uses a significantly more delicate and elaborate argument that shares some ideas with those in this paper. Importantly, even to address the weaker fractional relaxation version of these conjectures and questions of Talagrand, we need to use the full strength of the ideas in this paper as well as additional ideas in \cite{PPT}. 

\mn \textbf{Reformulation.} In Section \ref{sec.proofKK} we prove Theorem \ref{thm:KK} below, which implies Theorem \ref{thm:KKC}.  

A \textit{hypergraph} on $X$ is a collection $\cH$ of subsets of $X$, and a {set in the collection $\cH$} is called an \textit{edge} of $\cH$.  We say $\cH$ is \textit{$\ell$-bounded} if each of its edges has size at most $\ell$. Recall that $\langle \cH \rangle=\bigcup_{S \in \cH} \{T: T \supseteq S\}$. Note that we can extend  the definition of $p$-small to $\cH$ without any modification. For an integer $m$, we use an \textit{$m$-subset} of $X$ for a subset of $X$ of size $m$, and $X_m$ for a uniformly random $m$-subset of $X$.

\begin{theorem}\label{thm:KK}
Let $\ell\ge 2$. There is a universal constant $L$ such that for any nonempty $\ell$-bounded hypergraph $\cH$ on $X$ that is \textbf{not} $p$-small, 
\beq{conclusion} \mbox{a uniformly random $((Lp\log \ell)|X|)$-element subset of $X$ belongs to $\langle \cH\rangle$ with probability $1-o_{\ell\to \infty}(1)$.} \enq
\end{theorem}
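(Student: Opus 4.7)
The plan is to prove Theorem~\ref{thm:KK} by induction on the edge-size bound $\ell$, peeling off the random sample $W$ in roughly $t=\lceil\log_2\ell\rceil$ successive batches $U_1,U_2,\ldots,U_t$, each a uniform subset of size about $Cp|X|$ (for a universal constant $C$) drawn from what remains of $X$. With failure probability $o(1/\log\ell)$ per round, each batch will either already contain an edge of the current residual hypergraph or cut the edge-size bound in half while preserving non-$p$-smallness. The cumulative sample size $|U_1|+\cdots+|U_t|$ is at most $(Cp\log_2\ell)|X|$, matching the stated bound $(Lp\log\ell)|X|$ after absorbing constants into $L$.

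The technical heart is the following halving lemma: if $\cH$ is $\ell$-bounded and not $p$-small on $X$, and $U\sub X$ is a uniform random subset of size $q=Cp|X|$, then with probability $1-o(1/\log\ell)$ as $\ell\to\infty$, either $U\in\langle\cH\rangle$, or the hypergraph
\[
\cH^{\star}(U) \;:=\; \{\, S\sm U \,:\, S\in\cH,\ |S\sm U|\le \ell/2\,\}
\]
on $X\sm U$ is itself not $p$-small. Granting this lemma, the induction runs as follows: at round $i$ we apply it to the current non-$p$-small $\ell_i$-bounded residual hypergraph on the shrinking ground set $X\sm(U_1\cup\cdots\cup U_{i-1})$, with $\ell_0=\ell$ and $\ell_{i+1}=\ell_i/2$; once $\ell_i<1$ the residual hypergraph must contain $\emptyset$ (otherwise the empty cover would witness $p$-smallness), meaning an edge of the original $\cH$ is already contained in $U_1\cup\cdots\cup U_i\sub W$. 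A union bound absorbs the per-round failure probabilities.

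The halving lemma itself will be established by contraposition. Assuming the ``bad'' event $\{U\notin\langle\cH\rangle\}\cap\{\cH^{\star}(U)\text{ is }p\text{-small}\}$ has probability exceeding the target $\eta$, we aim to build a single cover of $\cH$ of total weight at most $1/2$, contradicting the non-$p$-smallness of $\cH$. For each bad $U$, fix a cover $\cG_U$ witnessing $p$-smallness of $\cH^{\star}(U)$, so $\sum_{T\in\cG_U}p^{|T|}\le 1/2$. Any edge $S\in\cH$ with $|S\sm U|\le\ell/2$ is then covered by some $T\in\cG_U$ with $T\sub S\sm U\sub S$. The remaining ``large-residue'' edges, for which $|S\sm U|>\ell/2$, will have to be charged via an averaging argument over bad $U$'s, possibly augmented by sub-edges extracted from the distribution of bad $U$'s.

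The main obstacle I anticipate is controlling the weight of the amalgamated cover, since the large-residue edges contribute $p^{|S|}$ each and their combined weight is not directly bounded by the non-$p$-small hypothesis on $\cH$ alone. Moreover, for $|S|$ close to $\ell$ the event $|S\sm U|\le\ell/2$ is a moderate-deviation event (its probability decays exponentially in $\ell$ when $q=Cp|X|$ with $C$ a constant), so no naive union bound can force every edge to have small residue for some single bad $U$. The resolution will require a careful charging scheme: for each $S\in\cH$ we must either identify a bad $U$ for which $S$ has small residue (charging $S$ to an element of $\cG_U$) or produce, from the collective structure of the bad $U$'s, a small-weight sub-edge of $S$ to include directly in the candidate cover. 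Making the resulting cover weight stay below $1/2$ while still covering every $S\in\cH$ is the delicate analytical step, and I expect it to be the longest piece of the argument.
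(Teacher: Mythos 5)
Your scaffolding (iterate, shrink the edge-size bound geometrically, contradict non-$p$-smallness by assembling a cheap cover from the bad event) matches the actual proof, but the step you yourself flag as ``the delicate analytical step'' is the entire content of the theorem, and the route you sketch toward it will not close. The decisive missing idea is to replace the raw residue $S\setminus U$ by the \emph{minimum fragment}: $T(S,W)$ is a set of the form $S'\setminus W$ of minimum size over all edges $S'\subseteq S\cup W$. With this definition the large-fragment edges (those with $t(S,W)\ge .9\ell$) are covered directly by their fragments $\{T(S,W)\}$, and the expected cost of that cover is bounded by an encoding argument: a pair $(W,T)$ with $|T|=m$ is determined by first choosing $Z=W\cup T$ (at most $\binom{n}{w+m}\le\binom{n}{w}(Lp)^{-m}$ choices, with $w=Lpn$) and then specifying $T$ as a subset of an \emph{arbitrary} edge $\hat S\subseteq Z$ (at most $2^{\ell}$ choices) --- the containment $T\subseteq\hat S$ holding precisely because $T$ is a minimum fragment, since otherwise $\hat S\setminus W$ would be a smaller fragment. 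This yields $\sum_{W}\sum_{U}p^{|U|}\le\binom{n}{w}L^{-m}2^{\ell}$, negligible for $m\ge .9\ell$ and $L$ a large constant, and Markov then makes the round's cover cheap with high probability. With raw residues $S\setminus U$ the containment $T\subseteq\hat S$ fails and no such encoding is available, which is exactly why your ``charging scheme'' for large-residue edges has no evident implementation; your own description of the obstacle (moderate-deviation events, no usable union bound) confirms the gap.

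Separately, your halving lemma asks for something both stronger than needed and harder to prove: per-round preservation of non-$p$-smallness with probability $1-o(1/\log\ell)$. The paper never establishes such a statement. Instead it runs the whole process, calls a round successful if its cover is cheap and the residual hypergraph does not contain $\emptyset$, and shows: (i) the cheap-cover event fails with probability at most $L_i^{-.1\ell_i}$ by the encoding bound; (ii) $\emptyset$ appearing in some residual hypergraph forces the full sample $W$ to contain an edge of $\cH$, i.e.\ $W\in\langle\cH\rangle$; and (iii) if the theorem's conclusion failed, then with positive probability every round succeeds, whence the union of the per-round covers is a cover of $\cH$ of total weight below $1/2$ --- contradicting non-$p$-smallness once, at the end, rather than round by round. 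You would also need to handle the late rounds where $\ell_i$ is bounded: there the failure probability $L^{-.1\ell_i}$ is only a constant, and the paper repairs this by inflating the sample size by a factor $\sqrt{\log\ell}$ in the last $O(\sqrt{\log\ell})$ rounds. As written, your proposal identifies the right architecture but does not contain a proof of its central lemma.
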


\begin{remark}
It is easy to see from the proof that one can obtain a quantitative bound for the $o_{\ell\to \infty}(1)$ term of the form $(\log \ell)^{-c}$ for $c>0$. 
\end{remark}

\begin{proof}[Derivation of Theorem \ref{thm:KKC} from Theorem \ref{thm:KK}.]
Let $\cF$ be as in Theorem \ref{thm:KKC}. We assume Theorem \ref{thm:KK} and derive that if $q>q(\cF)$ then, with $p=Kq\log\ell(\cF)$ ($K$ is a universal constant to be determined), we have $\pr(X_p \in \cF) > 1/2$. Here, we recall that for $0\le p\le 1$, we denote by $X_p$ the random variable with distribution $\mu_p$.

Let $\cH$ be the set of minimal elements of $\cF$ (so $\langle \cH \rangle=\cF$). Then $\cH$ is $\ell(\cF)$-bounded and not $q$-small (since $q>q(\cF)$). Let $C$ be a (universal) constant for which, with $\ell=C\ell(\cF)$, the exceptional probability in Theorem \ref{thm:KK} is less than $1/4$. 

Let $m=(Lq\log \ell)|X|$ and $p'=2Lq\log \ell$, and choose $K$ sufficiently large so that $p\ge p'$. Theorem~\ref{thm:KKC} vacuously holds if $p'>1$, so we can assume that $p'\le 1$, in which case
\[
\pr(X_{p'} \in \langle \cH\rangle) \ge  \pr(X_m \in \langle \cH\rangle) \;\pr(|X_{p'}|\ge m) \ge (3/4)\; \pr(|X_{p'}| \ge m) > 1/2,
\]
where the last inequality follows from standard concentration bound, upon noting that $\cH$ is not $q$-small implies $|X|q>1/2$ (since $\{\{x\}:x \in X\}$ covers $\cH$) and hence $m > (L\log \ell)/2$ (so is somewhat large). This concludes the derivation.
\end{proof}
\nin \textbf{Notations and Conventions.} All logarithms are base $2$ unless specified otherwise. We have not made an attempt to optimize the absolute constants. For the sake of clarity of presentation, we omit floor and ceiling signs when they are not essential. 

\section{Proof of Theorem \ref{thm:KK}}\label{sec.proofKK}

Before going through the proof in detail, we first give an informal overview of our strategy. A hypergraph $\cH$ is $p$-small if $\cH$ admits a ``cheap'' cover, where being cheap refers to the condition in \eqref{p-small}. Our proof uses a randomized iterative process. Starting with $\cH_0=\cH$, at the $i$th step for $i\ge 1$, we assume that we have some hypergraph $\cH_{i-1}$ produced from the $(i-1)$th step for which $|S_{i-1}|\le 0.9^{i-1}\ell$ for all $S_{i-1}\in \cH_{i-1}$, and consider $W_i$ which is a random subset of $X\setminus(\bigcup_{1\le j<i}W_j)$ of size $w_i \approx Lpn$ (we will see later in the proof that for the purpose of producing a tail bound going to $0$ with $\ell$, it is helpful to choose slightly larger $w_i$ towards the last iterations). We will show that there is a sub-hypergraph $\cG_i$ of $\cH_{i-1}$ that admits a cover $\cU_i$ which has small cost with high probability, as well as a hypergraph $\cH_i$ with the following key properties:
\beq{reduction} \cH_{i-1} \setminus \cG_i \sub \langle \cH_i \rangle,\enq
\beq{capture} (\bigcup_{j\le i}W_j)\cup S_i \in \langle \cH\rangle \,\, \textrm{for all }S_i\in \cH_i, \textrm{ and}\enq
\beq{small} |S_i|\le 0.9^i\ell \,\, \textrm{for all }S_i\in \cH_i.\enq 
The choices of $\cG_i$, $\cU_i$ and $\cH_i$ depend on $W_i$. In the $(i+1)$th step we repeat our process with the hypergraph $\cH_i$. Property \eqref{reduction} reduces the task of finding a cover of the ``leftover'' $\cH_{i-1}\setminus \cG_i$ to finding a cover of $\cH_i$. Property \eqref{small} implies that for $\gamma > \log_{0.9}(1/\ell)$, either $\cH_{\gamma}$ is empty, or $\cH_{\gamma}$ only contains the empty set. In the former case, one can check that $\bigcup_i \cU_i$ is a cover of $\cH$; and in the latter case, one has from \eqref{capture} that $W=\bigcup_i W_i \in \langle \cH\rangle$. Since $\cH$ does not admit a cheap cover (as it does not satisfy \eqref{p-small}), and the cover $\bigcup_i \cU_i$ has small cost with high probability, we conclude that $W \in \langle \cH\rangle$ with high probability. Theorem \ref{thm:KK} then follows.

 In Section \ref{subsec.cover} we describe our construction of the cheap cover $\cU=\cU_i$ (in each step), and in Section~\ref{subsec.iteration} we analyze our iteration, concluding our proof.

\subsection{Constructing a cover}\label{subsec.cover} We use $n$ for $|X|$. Let $L$ {be large enough to support our conclusion} and let $\cH$ be $\ell$-bounded. In the following argument, we always assume that $S, S', S'', \hat S \in \cH$ and $W \in {X \choose w}$, where $w:= Lpn$ (as usual, ${X \choose w}$ is the collection of $w$-subsets of $X$).

The following notion of a \textit{minimum fragment} is key in our proof. Given $S$ and $W$, we say that $T=T(S,W)$ is a minimum $(S,W)$-fragment if $T$ is the set of the smallest size (possibly empty) such that $T$ can be written as $S' \setminus W$ for some $S' \in \cH$ with $S' \sub W \cup S$ (breaking ties arbitrarily). Note that such $S'$ exists as $S\in \cH$ and $S\sub W \cup S$. We use $t=t(S,W)$ for $|T(S,W)|$.

Given $W$, $\cG=\cG(W)$ is the collection of $S$ whose minimum fragment with respect to $W$ is ``large;'' formally,
\[\cG(W):=\{S \in \cH: t(S,W) \ge 0.9\ell\}.\]
Then we define $\cU(W)$, a cover of $\cG(W)$, as 
\[\cU(W):=\{T(S,W): S \in \cG(W)\}\]
(the fact that $\cU(W)$ covers $\cG(W)$ follows since each minimum fragment $T(S,W)$ is a subset of $S$).

Note that the edges in $\cH \setminus \cG(W)$ are not necessarily covered by $\cU(W)$. We define
\beq{H'} \cH'=\cH'(W)=  \{T(S,W): S \in \cH \setminus \cG(W)\};\enq
the hypergraph $\cH'$, which is $.9\ell$-bounded, will be the host hypergraph in the next iteration step (see \eqref{def.H'}). Note that $\cH \setminus \cG(W) \sub \langle \cH' \rangle$ (as promised in \eqref{reduction}), so in particular,
\beq{cover} \mbox{a cover of $\cH'$ also covers $\cH \setminus \cG(W)$.}\enq

The following lemma is our key lemma, which says that the cover $\cU(W)$ of $\cG(W)$ has small cost with high probability (over the randomness of $W$). 
\begin{lemma}\label{lem3.1} For $W$ uniformly chosen from ${X \choose w}$, 
\beq{3.1} \mathbb{E}\left[ \sum_{U\in \cU(W)}p^{|U|} \right] < L^{-0.8\ell},
\enq
where the expectation is over the randomness of $W$. 
\end{lemma}

Observe that Lemma \ref{lem3.1} is equivalent to
\beq{lem.alt} \sum_{W \in {X \choose w}} \sum_{U \in \cU(W)} p^{|U|} < {n \choose w}L^{-0.8\ell}. \enq

\begin{proof}[Proof of \eqref{lem.alt}]
Given $W$ and $m \ge 0.9\ell$, let
\[\cG_m(W):=\{S \in \cH: t(S,W) =m\}\]
and
\[\cU_m(W):=\{T(S,W): S \in \cG_m(W)\}.\]
Note that for any $U \in \cU_m(W)$ we have $|U|=m$, so $ \sum_{W \in {X \choose w}} \sum_{U \in \cU_m(W)} p^{|U|}$ is equal to $p^m$ multiplied by
\beq{TSW} \left|\left\{(W,T(S,W)): W \in {X \choose w}, S \in \cH, \mbox{ and } t(S,W)=m\right\}\right|.\enq
We bound the number of choices of $W$ and $T=T(S,W)$'s in the collection in \eqref{TSW} using the following specification steps.

\begin{enumerate}[Step 1.]
\item Pick $Z:=W \cup T$. Since $|Z|=w+m$ (note $W$ and $T$ are always disjoint), the number of possibilities for $Z$ is at most (recalling $w=Lpn$) 
\[{n \choose w+m} = {n \choose w} \cdot \prod_{j=0}^{m-1}\frac{n-w-j}{w+j+1} \le {n\choose w} (Lp)^{-m}.\]

\item
Note that $Z\; (= W \cup T)$ must contain an edge of $\cH$ by the definition of minimum fragment. Make a choice of $\hat S \sub Z$ arbitrarily so that the choice of $\hat S$ only depends on $Z$. In particular, the choice of $\hat S$ is free given $Z$. Here a crucial observation is that, since $T(S,W)$ is a minimum fragment,
\beq{T.in.S} T \sub \hat S.\enq
Indeed, since $\hat S$ is contained in $T\cup W\subseteq S\cup W$, the failure of \eqref{T.in.S} implies that $\hat S \subset S \cup W$ has $|\hat S \setminus W| < |T|$, contradicting the minimality of $T$.

The property \eqref{T.in.S} enables us to specify $T$ as a subset of $\hat S$, the number of possibilities of which is at most $2^\ell$.
\end{enumerate}
Note that $(W,T)$ is determined upon fixing a choice of $Z$ and $T$. In sum, we have
\[\sum_{W \in {X \choose w}} \sum_{U \in \cU_m(W)} p^{|U|}\le p^m{n \choose w}(Lp)^{-m}2^\ell={n \choose w}L^{-m}2^\ell,\]
and the left hand side of \eqref{lem.alt} is at most
\[\sum_{m \ge 0.9\ell} {n \choose w}L^{-m}2^\ell \le {n \choose w} L^{-0.8\ell}\]
for $L$ {sufficiently large}.
\end{proof}

\begin{remark}\label{rmk 2.2}
The proofs of the main lemma of \cite{ALWZ} and \cite{FKNP, KNP} also use similar-looking specification steps. By replacing their definition of $Z \; (:=W \cup S)$ with the one in the above proof $(Z:=W \cup T)$, one can remove the ``pathological'' case analysis in \cite{FKNP, KNP} and thus obtain a simplification of the argument there. 
\end{remark}

\subsection{Iteration}\label{subsec.iteration}
Recall that $n=|X|, \, \ell \rightarrow \infty,$ and $L$ is a large constant. Let $\gamma = \lfloor \log_{0.9}(1/\ell)\rfloor+1$. In the following definitions, $i=1, 2, \ldots, \gamma$. Let $\ell_i=0.9^i \ell$ and note that
\beq{gamma1}0.9 \le \ell_\gamma < 1. \enq

Let $X_0=X$ and $W_i$ be uniform from ${X_{i-1} \choose w_i}$, where $X_i=X_{i-1} \setminus W_i$ and $w_i=L_ipn$ with
\[L_i=\begin{cases}
L & \mbox{if } \quad i<\gamma - \sqrt{\log_{0.9}(1/\ell)}\\
L \sqrt{\log \ell}\; & \mbox{if } \quad \gamma - \sqrt{\log_{0.9}(1/\ell)} \le i \le \gamma.
\end{cases}\]
At the end, $W := \bigcup_{i=1}^{\gamma} W_i$ is a uniformly random $(CLp\log \ell)n$-subset of $X$ where $C \le C'$ for some absolute constant $C'>0$. 
Note that there is an absolute constant $c>0$ for which
\beq{i.bound} \mbox{$\ell_i >  \exp(c \sqrt{\log \ell}) \quad \textrm{for all } i < \gamma - \sqrt{\log_{0.9}(1/\ell)}$.}\enq

By iteratively applying our argument in Section \ref{subsec.cover}, we produce a sequence $\{\cH_i\}$ with $\cH_0=\cH$ and
\beq{def.H'} \cH_i=\cH_{i-1}' \enq
(see \eqref{H'} to recall the definition of $\cH'$). Note that each $\cH_i$ is $\ell_i$-bounded, and associated to each set $W_i$ in step $i$, we have $\cG_i=\cG_i(W_i) \subseteq \cH_{i-1}$ and a cover $\cU_i=\cU_i(W_i)$ of $\cG_i$. Properties \eqref{reduction}, \eqref{small}, and inductively Property \eqref{capture}, can be easily verified from our construction of $\cG_i$, $\cU_i$ and $\cH_i$. Indeed, assuming that Property \eqref{capture} holds for $\cH_{i-1}$, then, since each hyperedge $S_i \in \cH_i$ has the form $S_i = T(S_{i-1},W_i)=S_{i-1}'\setminus W_i$ for some $S_{i-1},S_{i-1}'\in \cH_{i-1}$, we have 
\[
(\bigcup_{j\le i}W_j)\cup S_i =(\bigcup_{j\le i-1}W_j) \cup W_i \cup (S_{i-1}'\setminus W_i) \supseteq (\bigcup_{j\le i-1}W_j) \cup S_{i-1}'\in \langle \cH\rangle.
\]

\begin{prop}\label{obs.term}
We have that either $W\in \langle \cH\rangle$ or $\cU:=\bigcup_{i \le \gamma} \cU(W_i)$ covers $\cH$.
\end{prop}

\begin{proof}
Note that $\ell_\gamma<1$, hence either $\cH_\gamma = \emptyset$ or $\cH_\gamma = \{\emptyset\}$. 

In the former case ($\cH_\gamma =\emptyset$), we show that $\cU$ covers $\cH$. Indeed, for $S\in \cH$, let $S=S_0, S_1, S_2, \ldots$ ($S_i \in \cH_i$) be the evolution of $S$ in the iteration process, i.e., $S_i:=T(S_{i-1},W_i)$. In each iteration, either $S_i \in \cG_i$ and is covered by $\cU_i$, or otherwise, $S_{i+1}\in \cH_{i+1}$. Since $\cH_\gamma=\emptyset$, there exists $j < \gamma$ for which $S_j \in \cG_j$. Hence, $S$ is covered by $\cU$. 

In the latter case ($\cH_\gamma = \{\emptyset\}$), we show that $W\in \langle \cH\rangle$. Indeed, by Property \eqref{capture}, $W_1\cup \ldots \cup W_\gamma\cup \emptyset \in \langle \cH\rangle$, and hence $W \in \langle \cH\rangle$. 
\end{proof}

Let $\cE$ be the event that $\cU$ covers $\cH$. By Proposition \ref{obs.term}, we have
\[\pr(W \in \langle \cH \rangle)+\pr(\cE) \ge 1;\]
equivalently,
\[\pr(W \in \langle \cH \rangle) \ge 1-\pr(\cE).\]
To bound $\pr(\cE)$, note that the expected ``cost'' for {the} cover $\cU$ is
\beq{exp.cost}
\begin{split}\mathbb E \left[\sum_{U\in \cU(W)} p^{|U|} \right] \stackrel{\eqref{3.1}}{<} \sum_{i \le {\gamma}} L_i^{-0.8\ell_i} & =\sum_{i< \gamma-\sqrt{\log_{0.9}(1/\ell)}} L_i^{-0.8\ell_i} +  \sum_{i= \gamma-\sqrt{\log_{0.9}(1/\ell)}}^{\gamma} L_i^{-0.8\ell_i}\\
& \stackrel{ \eqref{gamma1},\eqref{i.bound}}{\le} 2L^{-0.8\exp(c\sqrt{\log \ell})} + O((L \sqrt{\log \ell})^{-c'}) = (\log \ell)^{-c''}
\end{split}
\enq
for some constant $c',c''>0$. Also note that, by the assumption that $\cH$ is not $p$-small,
\beq{cost.large} \mbox{ if $\cE$ occurs, then  $\sum_{U\in \cU(W)} p^{|U|}>1/2$.}\enq
Therefore, by combining \eqref{cost.large} and Lemma \ref{lem3.1}, we have
\[\pr(\cE) \le \pr\left(\sum_{U\in \cU(W)} p^{|U|}>1/2 \right)  \stackrel{(\star)}{\le} 2 \mathbb{E}\left[ \sum_{U\in \cU(W)}p^{|U|} \right] \stackrel{\eqref{exp.cost}}{=}(\log \ell)^{-c''} = o_{\ell\to \infty}(1). \]
where $(\star)$ {follows from} Markov's Inequality. This gives the desired conclusion.

\section{Acknowledgements}
The authors are deeply grateful to Jacob Fox, Jeff Kahn, and David Conlon for their support and helpful comments on the paper. We also would like to thank Bhargav Narayanan, Wojciech Samotij, Lutz Warnke, the anonymous FOCS reviewers and the anonymous JAMS reviewers for helpful comments that improve the exposition of the paper. Jinyoung Park is supported by NSF grant DMS-2153844. Huy Tuan Pham is supported by a Two Sigma Fellowship.

\end{document}